\newtheorem{proposition}{Proposition}[section]
\newtheorem{remark}[proposition]{Remark}
\newenvironment{proof}{\noindent\textbf{Proof.}
  }{\hspace*{\fill}$\Box$ \\[1em]}
\renewcommand{\Re}{{\mathbb R}}
\def\trt{^{\scriptscriptstyle T}}
\title{A dominance maximization approach to portfolio selection}
\author[1]{Francesco Cesarone}
\author[2]{Lorenzo Lampariello}       
\author[3]{Simone Sagratella} 
\affil[1]{Department of Business Studies, Roma Tre University, Rome, Italy\\ francesco.cesarone@uniroma3.it}
\affil[2]{Department of Business Studies, Roma Tre University, Rome, Italy\\ lorenzo.lampariello@uniroma3.it}
\affil[3]{Department of Computer, Control and Management Engineering Antonio Ruberti, Sapienza University of Rome, Rome, Italy\\

sagratella@diag.uniroma1.it}
\begin{document}

\maketitle


\noindent {\textbf{Keywords}} {Asset allocation $\bullet$ Portfolio (multiobjective) Optimization $\bullet$  No-preference methods $\bullet$ Nonlinear Programming}

\maketitle

\begin{abstract}
In the portfolio multiobjective optimization framework, we propose to compare and choose, among all feasible asset portfolios of a given market, the one that maximizes the product of the distances between its values of risk and gain and those of a suitable reference point (e.g., the so-called nadir). We show that this approach has distinctive and remarkable features. While being not influenced by how the objectives are scaled, it provides one with an efficient (Pareto) portfolio that ``dominates the most'' with respect to the reference point. Furthermore, although our no-preference strategy generally requires the solution of a nonconvex (constrained) single-objective problem, we show how the resulting (global) optimal portfolio can be easily and efficiently computed. We also perform numerical tests based on some publicly available benchmark data sets often used in the literature, highlighting the nice properties of our approach.
\end{abstract}


\section{Introduction}
Portfolio selection is a typical decision problem under uncertainty,
where the drivers of uncertainty are the asset returns.
Generally, the aim is to choose the fractions of a given capital invested in each of $n$ assets,
such that the resulting portfolio return satisfies specific criteria.
Starting with the seminal work of \citet[]{markowitz1952portfolio,Mark:59},
the key problem in asset allocation is to select a portfolio with appropriate features
in terms of gain and risk.
From a mathematical viewpoint, the synthetic indices that represent gain and risk are modeled by functions of $n$ real variables to be optimized simultaneously: this results in a two objectives problem. Solving multiobjective programs usually consists in computing the Pareto optimal solution that best suites the decision maker; this is in general carried out by scalarization. Just with respect to the latter approach, it is hardly possible here to even summarize the huge amount of solution methods that have been proposed in the literature. We only mention that solution methods are commonly grouped into four main categories: {\it{no-preference}}, {\it{a posteriori}}, {\it{a priori}} and {\it{interactive}} methods. We refer the reader to \cite{miettinen2012nonlinear} for both theoretical bases and review of the related literature.

In portfolio selection, typically, one can distinguish a first step of the gain-risk analysis \citep[see, e.g.,][]{Mark:59,Elton2009modern,blay2013risk} where the efficient portfolios, namely the Pareto optimal solutions, are detected. Among the Pareto optimal portfolios, one can adopt, with respect to risk or gain, preference criteria that, possibly, are introduced in the second step of the gain-risk analysis. For example, 
once the preferences of the decision maker have been modeled by a utility function, the optimal portfolio must be the efficient one with maximum expected utility \citep[see][and references therein]{blay2013risk,Markowitz2014mean,Carleo2017approximating}. Here, for the sake of completeness, we also mention other related works \citep[see][and references therein]{Roman2009,Bruni2012new,Bruni2015linear}. 

One of the main limitations of the expected utility maximization is the subjective specification of a utility function. In this paper, adopting a different point of view, we wish to provide an alternative framework to the expected utility approach, in the same spirit of compromise programming techniques \citep[cf., e.g.,][]{Ballestero1996portfolio}. More precisely, along the lines of the analysis in \cite{stoyanov2007optimal}, we propose to compare and choose among all portfolios by relying on a performance measure optimization: this results in a no-preference strategy that requires the solution of a nonlinear nonconvex (constrained) single-objective problem. 

To easily describe the rationale behind this no-preference method, let us consider a (convex) measure of risk $\rho_{P}(x)$
and one (concave) that represents gain $\gamma_{P}(x)$, where $x \in \mathbb R^n$ is the vector of portfolio weights.
Each feasible portfolio $x$ identifies, in the risk-gain plane, a rectangle $R_x$ defined by a suitable reference point $(\rho_P^{\text{ref}}, \gamma_P^{\text{ref}})$
(consider, here, for the sake of simplicity, the vector whose components are the maximal risk and the minimal gain over the feasible region, respectively) and the point $\left(\rho_{P}(x), \, \gamma_{P}(x)\right).$
Clearly, $x$ dominates all the feasible portfolios whose objective values belong to the rectangle.
In the light of this consideration, we aim at computing a Pareto optimal solution that maximizes the area of the corresponding rectangle.
This simple idea draws inspiration from the {\it{hypervolume}} paradigm \citep[see, e.g.,][]{auger2009theory,fleischer2003measure,zitzler1998multiobjective}, shares some conceptual similarities with other approaches \citep[e.g., GUESS and the method of the global criterion][]{buchanan1997naive,miettinen2012nonlinear}, and, from another point of view, consists in maximizing the (weighted) geometric mean of the difference between the objective functions and the components of the reference point \citep[see][]{audet2008multiobjective,lootsma1995controlling}.

We further investigate the peculiarity of the resulting approach: we show that using the area of the rectangle $R_x$ as performance measure in order to compare portfolios has distinctive and significant features. Namely, computing a (global) solution of the underlying optimization problem is not sensitive to the scaling of the objective functions and provides one with an efficient portfolio that ``dominates the most'' (in the sense of the area of the induced rectangle) with respect to the risk-gain space. Interestingly, this Pareto optimum has the nice property that any other feasible portfolio (including all the other efficient ones), for which one of the two objectives is ``improved'' (e.g., entailing a larger gain or a smaller risk than the ones provided by our approach) by a factor, is such that the other objective must ``worsen'' {\textit{by at least the same factor}}. 
Moreover, one of the main practical advantages of our approach is that the underlying optimization problem turns out to be easily solvable. In fact, while, in general, one can not expect to solve globally a nonconvex single-objective program (like the one arising in our framework) in order to recover global Pareto optimal solutions, however, regarding the problem of maximizing the area of $R_x$, any stationary point, with a nontrivial positive value of the corresponding area, turns out to be global (Pareto) optimal. Leveraging this remarkable property, we present a very effective variant of the projected gradient algorithm to calculate efficiently the sought global (Pareto) optimum.

We provide some numerical results showing the significance of the computed Pareto optima. We stress that, for each stock market that we have considered, the amount of time needed in order to reach a solution is below half a second.

With the preliminary results in this work, we intend to lay down the basis for further analyses and developments. On the one hand, we wish to compare, from a theoretical standpoint, our method with other classical techniques aimed at optimizing performance measures such as the reward-risk ratios (cf., e.g., Sharpe, STARR and Rachev ratios) or the Chebychev norm distance. On the other hand, referring to Nash equilibrium problems contexts \citep[see, e.g.,][]{aussel2017sufficient,dreves2011solution,facchinei2011partial,FacchSagra10,sagratella2016computing,sagratella2017algorithms,sagratella2017computing,scutari2012equilibrium}, we envisage that the peculiar nature of the approach makes it fit particularly well into the noncooperative scenario of multi-portfolio selection.

\section{Main properties of the approach}\label{sec:prel}
We consider the portfolio selection problem
\begin{equation}\label{eq:mobini}
\begin{array}{cl}
\underset{x}{\mbox{minimize}} & \big(-\gamma_P(x), \, \rho_P(x)\big)\trt\\
\mbox{s.t.} & x \in \Delta,
\end{array}
\end{equation}
where $\gamma_P(x): \Re^n \to \Re$ is any continuous concave measure of gain and $\rho_P(x): \Re^n \to \Re$ is any continuous convex measure of risk; moreover, $\Delta \triangleq \left\{x \in \mathbb R^n \, : \, x \ge 0, \, e\trt x = 1\right\}$, where $e \in \mathbb R^n$ is the vector with all components being $1$, is the nonempty, convex and compact feasible set. For the sake of simplicity and without loss of generality, we do not contemplate short sales. 

Let $(\rho_P^{\text{ref}}, \gamma_P^\text{ref}) \in \mathbb R^2$ be a suitable reference point, such as the so-called nadir vector, whose components are the individual maxima in the Pareto front of each objective (that is, as for $\rho_P^\text{ref}$, the risk of the portfolio with maximal gain, while, as $\gamma_P^\text{ref}$, the gain of the minimum variance portfolio), or the worst point, that is the vector whose components are the maximal risk and the minimal gain over the feasible set, respectively: any portfolio $x \in \Delta$, such that $\gamma_P(x) \ge \gamma_P^{\text{ref}}$ and $\rho_P(x) \le \rho_P^{\text{ref}}$, identifies in the risk-gain space a rectangle $R_x$ with base $\left(\rho_P^\text{ref} - \rho_P(x)\right) \ge 0$ and height $\left(\gamma_P(x) - \gamma_P^\text{ref}\right) \ge 0$. 

We compare portfolios by maximizing, as performance measure, the area of $R_x$: 
\begin{equation}\label{eq:mobour}
\begin{array}{cl}
\underset{x}{\mbox{minimize}} & -\big(\gamma_P(x) - \gamma_P^\text{ref}\big) \big(\rho_P^\text{ref} - \rho_P(x)\big)\\
\mbox{s.t.} & \gamma_P(x) \ge \gamma_P^\text{ref}\\[5pt]
& \rho_P(x) \le \rho_P^\text{ref}\\[5pt]
&  x \in \Delta.
\end{array}
\end{equation}
We denote by $A(x) \triangleq \big(\gamma_P(x) - \gamma_P^\text{ref}\big) \big(\rho_P^\text{ref} - \rho_P(x)\big)$ and $X \triangleq \{x \in \Delta \, : \,  \gamma_P(x) \ge \gamma_P^\text{ref}, \, \rho_P(x) \le \rho_P^\text{ref}\}$ the area of $R_x$, which is maximized in \eqref{eq:mobour}, and the feasible set in \eqref{eq:mobour}, respectively. Note that, from another point of view, $A(x)$ is nothing else than the geometric mean, with unitary weights, of the differences between the components of a suitable reference point and the objectives, \citep[see][]{lootsma1995controlling}.
It goes without saying that, if $\rho_P^{\text{ref}} = \max \{\rho_P(x) \, : \, x \in \Delta\}$ and $\gamma_P^{\text{ref}} = \min \{\gamma_P(x) \, : \, x \in \Delta\}$, i.e. the reference point is the worst one, the constraints $\gamma_P(x) \ge \gamma_P^{\text{ref}}$ and $\rho_P(x) \le \rho_P^{\text{ref}}$ in \eqref{eq:mobour} are satisfied for every $x \in \Delta$, and, thus, can be dropped. We indicate with $x_A$ a global optimal point for problem \eqref{eq:mobour}. In a nutshell, choosing among portfolios by addressing \eqref{eq:mobour} has the following prominent advantages and distinctive features:
\begin{enumerate}
\item[(i)]
$x_A$ is a global Pareto efficient portfolio for \eqref{eq:mobini} (see Proposition \ref{th:ottimo-pareto} and \ref{pr: area > 0});
\item[(ii)]
among all efficient portfolios for \eqref{eq:mobini}, ${x_A}$, maximizing the area of the rectangle $R_x$, ``dominates the most'' in the risk-gain space with respect to the reference point $(\rho_P^\text{ref}, \gamma_P^\text{ref})$ (see Figure \ref{fig:area} for a rather explanatory illustration of this feature);   
\item[(iii)]
finding a solution of \eqref{eq:mobour} does not depend on the objectives' scales, thus allowing to ``robustly'' deal with non homogeneous objectives, such as risk and gain (see Remark \ref{rm:ii});
\item[(iv)]
any feasible portfolio (including all the efficient ones), for which one between risk and gain is ``improved'' (w.r.t. its value at ${x_A}$) by a factor, is such that the remaining objective must ``worsen'' (w.r.t. its value at ${x_A}$) {\textit{by at least the same factor}} (see Proposition \ref{th:proper});
\item[(v)]
$x_A$ is easily and efficiently computable (see Section \ref{sec:theory}).
 \end{enumerate} 
With the following proposition, which is reminiscent of a similar result in \cite{audet2008multiobjective}, we prove the claim in (i) establishing the link between \eqref{eq:mobour} and the original two objective portfolio selection \eqref{eq:mobini}.
\begin{proposition}\label{th:ottimo-pareto}
Any global solution ${x_A}$ for problem \eqref{eq:mobour} such that $A({x_A}) > 0$ is a global Pareto optimum for problem \eqref{eq:mobini}.
\end{proposition}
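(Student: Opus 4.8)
The plan is to argue by contradiction: suppose $x_A$ is a global solution of \eqref{eq:mobour} with $A(x_A)>0$, but $x_A$ is \emph{not} Pareto optimal for \eqref{eq:mobini}. Then there exists a feasible $y\in\Delta$ that dominates $x_A$, meaning $\gamma_P(y)\ge\gamma_P(x_A)$ and $\rho_P(y)\le\rho_P(x_A)$, with at least one inequality strict. The first thing I would check is that $y$ is automatically feasible for \eqref{eq:mobour}: since $A(x_A)>0$ we have $\gamma_P(x_A)>\gamma_P^\text{ref}$ and $\rho_P(x_A)<\rho_P^\text{ref}$ strictly, and the dominance inequalities then give $\gamma_P(y)\ge\gamma_P(x_A)>\gamma_P^\text{ref}$ and $\rho_P(y)\le\rho_P(x_A)<\rho_P^\text{ref}$, so $y\in X$.

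Next I would compare the two areas. Writing $A(y)=\big(\gamma_P(y)-\gamma_P^\text{ref}\big)\big(\rho_P^\text{ref}-\rho_P(y)\big)$, the dominance inequalities give $\gamma_P(y)-\gamma_P^\text{ref}\ge\gamma_P(x_A)-\gamma_P^\text{ref}>0$ and $\rho_P^\text{ref}-\rho_P(y)\ge\rho_P^\text{ref}-\rho_P(x_A)>0$, so multiplying the two (both sides positive) yields $A(y)\ge A(x_A)$; and because at least one of the two factor-inequalities is strict while the other factor is strictly positive, the product inequality is in fact strict, $A(y)>A(x_A)$. This contradicts global optimality of $x_A$ for \eqref{eq:mobour} (recall \eqref{eq:mobour} \emph{minimizes} $-A(x)$, i.e.\ maximizes $A$). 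Hence no such $y$ exists and $x_A$ is Pareto optimal; the solution is moreover ``global'' in the trivial sense that Pareto optimality is a global notion.

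I do not expect a genuine obstacle here — the argument is essentially the monotonicity of the product of two positive, individually monotone factors. The one point that needs care, and which is the reason the hypothesis $A(x_A)>0$ cannot be dropped, is the strict positivity of both factors at $x_A$: this is exactly what upgrades the weak product inequality to a strict one and what guarantees $y\in X$ in the first place. If $A(x_A)=0$ one of the factors vanishes and a dominating $y$ could leave the area unchanged at $0$, so the conclusion genuinely fails. I would therefore state the use of $A(x_A)>0$ explicitly at the two places where it enters.
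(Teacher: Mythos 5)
Your proposal is correct and follows essentially the same contradiction argument as the paper: a dominating portfolio would have both factors of $A$ at least as large, with one strictly larger and the other strictly positive, yielding $A(y) > A(x_A)$. Your explicit verification that the dominating point $y\in\Delta$ actually lies in $X$ (using $A(x_A)>0$) is a small but welcome extra step that the paper's proof leaves implicit by taking $\widehat x\in X$ from the outset.
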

\begin{proof}
Suppose by contradiction that there exists $\widehat x \in X$ such that, without loss of generality, $\gamma_P(\widehat x) > \gamma_P({x_A}), \; \rho_P(\widehat x) \le \rho_P({x_A}).$
In turn, $\gamma_P(\widehat x) - \gamma_P^\text{ref} > \gamma_P({x_A}) - \gamma_P^\text{ref}>0, \; \rho_P^\text{ref} - \rho_P(\widehat x) \ge \rho_P^\text{ref} - \rho_P({x_A}) > 0$ and, hence, $A(\widehat x) > A({x_A})$, which contradicts the optimality of ${x_A}$ for problem \eqref{eq:mobour}.  \end{proof}
We remark that the assumption $A({x_A}) > 0$ in Proposition \ref{th:ottimo-pareto} is not demanding since the optimal value of problem \eqref{eq:mobour} reaches zero only in pathological cases. Specifically, in Proposition \ref{pr: area > 0} we show that, if condition $A({x_A}) > 0$ is not verified, then at least an objective reaches its reference value for every feasible point, and, thus, can be dropped.
\begin{proposition}\label{pr: area > 0}
Let ${x_A}$ be any global solution for problem \eqref{eq:mobour}. Then, $A({x_A}) = 0$ if and only if either $\gamma_P(x) = \gamma_P^\text{ref}$ for all $x \in X$ or $\rho_P(x) = \rho_P^\text{ref}$ for all $x \in X$.
\end{proposition}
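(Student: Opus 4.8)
The plan is to prove the two implications separately, with the reverse direction being essentially immediate and the forward direction requiring the bulk of the work.

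The ``if'' direction is trivial: if $\gamma_P(x) = \gamma_P^\text{ref}$ for every $x \in X$, then $A(x) = 0$ identically on $X$, so in particular $A(x_A) = 0$; the case $\rho_P(x) = \rho_P^\text{ref}$ for all $x \in X$ is symmetric. So the real content is the ``only if'' direction: assuming $A(x_A) = 0$, I must show that one of the two objectives is pinned to its reference value throughout $X$. First I would note that $A(x_A) = 0$ together with global optimality of $x_A$ means $\max_{x \in X} A(x) = 0$, hence $A(x) \le 0$ for all $x \in X$; but by feasibility in \eqref{eq:mobour} each factor $\big(\gamma_P(x) - \gamma_P^\text{ref}\big)$ and $\big(\rho_P^\text{ref} - \rho_P(x)\big)$ is nonnegative, so in fact $A(x) = 0$ for every $x \in X$. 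Thus for each $x \in X$, at least one of $\gamma_P(x) = \gamma_P^\text{ref}$ or $\rho_P(x) = \rho_P^\text{ref}$ holds; the task is to upgrade this pointwise dichotomy to a global one.

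The key step — and the main obstacle — is ruling out the ``mixed'' situation in which $X$ splits into a nonempty set $X_\gamma \triangleq \{x \in X : \gamma_P(x) = \gamma_P^\text{ref} < \gamma_P(\cdot)\text{-attainable}\}$ where only the gain constraint is tight and a nonempty set $X_\rho$ where only the risk constraint is tight. Here I would exploit convexity: $X$ is convex (intersection of the convex $\Delta$ with the convex sublevel/superlevel sets of $\rho_P$ and $-\gamma_P$), and $\gamma_P$ is concave while $\rho_P$ is convex. Suppose, for contradiction, there exist $x_1 \in X$ with $\gamma_P(x_1) > \gamma_P^\text{ref}$ (forcing $\rho_P(x_1) = \rho_P^\text{ref}$) and $x_2 \in X$ with $\rho_P(x_2) < \rho_P^\text{ref}$ (forcing $\gamma_P(x_2) = \gamma_P^\text{ref}$). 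Consider the segment $x_t = (1-t)x_1 + t x_2$, $t \in [0,1]$, which lies in $X$. By concavity of $\gamma_P$, $\gamma_P(x_t) \ge (1-t)\gamma_P(x_1) + t\gamma_P^\text{ref} > \gamma_P^\text{ref}$ for all $t \in [0,1)$, so along the whole half-open segment the gain is strictly above its reference; hence, by the pointwise dichotomy, $\rho_P(x_t) = \rho_P^\text{ref}$ for all $t \in [0,1)$. But by convexity of $\rho_P$, $\rho_P(x_t) \le (1-t)\rho_P^\text{ref} + t\,\rho_P(x_2) < \rho_P^\text{ref}$ for every $t \in (0,1]$, a contradiction on any $t \in (0,1)$.

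Finally I would assemble the conclusion: the contradiction shows that we cannot simultaneously have a point of $X$ with $\gamma_P(x) > \gamma_P^\text{ref}$ and a point of $X$ with $\rho_P(x) < \rho_P^\text{ref}$. Therefore either $\gamma_P(x) = \gamma_P^\text{ref}$ for all $x \in X$, or $\rho_P(x) = \rho_P^\text{ref}$ for all $x \in X$, which is exactly the claimed alternative. I expect the only subtlety worth stating carefully is the passage from ``$A \le 0$ on $X$ by optimality'' to ``$A \equiv 0$ on $X$'' using the sign of the factors, and the convexity/concavity bookkeeping in the segment argument; everything else is routine.
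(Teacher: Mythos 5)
Your proposal is correct and follows essentially the same route as the paper: after observing that $A \equiv 0$ on $X$, you take a convex combination of a point with $\gamma_P > \gamma_P^\text{ref}$ and a point with $\rho_P < \rho_P^\text{ref}$ and use concavity of $\gamma_P$ and convexity of $\rho_P$ to produce a feasible point violating the pointwise dichotomy (the paper uses the midpoint and phrases the contradiction as $A(\overline x) > 0$). The only differences are cosmetic: you spell out the trivial ``if'' direction, which the paper omits.
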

\begin{proof}
We only show the necessity. 
 Let $A({x_A}) = \big(\gamma_P({x_A}) - \gamma_P^\text{ref}\big) \big(\rho_P^\text{ref} - \rho_P({x_A})\big) = 0$ and suppose by contradiction that $\widehat x, \widecheck x \in X$ exist such that $\gamma_P(\widehat x) > \gamma_P^\text{ref}$ and $\rho_P(\widecheck x) < \rho_P^\text{ref}$. Clearly, we have $\rho_P(\widehat x) = \rho_P^\text{ref}$ and $\gamma_P(\widecheck x) =  \gamma_P^\text{ref}$ because $A(x) = 0$ for every $x \in X$, due to $A(x_A) = 0$. Setting $\overline x = \frac{1}{2} \widehat x + \frac{1}{2} \widecheck x$, we obtain, by the concavity of $\gamma_P$, $\gamma_P(\overline x) \geq \frac{1}{2} \gamma_P(\widehat x) + \frac{1}{2} \gamma_P^\text{ref} > \gamma_P^\text{ref}$ and, thanks to the convexity of $\rho_P$, $\rho_P(\overline x) \leq \frac{1}{2} \rho_P^\text{ref} + \frac{1}{2} \rho_P(\widecheck x) < \rho_P^\text{ref}$. Therefore, $A(\overline x) = \big(\gamma_P(\overline x) - \gamma_P^{\text{ref}}\big) \big(\rho_P^{\text{ref}} - \rho_P(\overline x)\big)> 0$, in contradiction with the fact that the optimal value $A({x_A})$ is zero.  \end{proof}
In the following remark we better explain property (ii) above.
\begin{remark}\label{rm:ii}
If the decision maker prefers to scale, for example, the risk measure $\rho_P$ by a positive factor $\eta$, then the reference value becomes $\eta \rho_P^{\text{ref}}$ and, $\arg\min \{ -\big(\gamma_P(x) - \gamma_P^\text{ref}\big) \big(\rho_P^\text{ref} - \rho_P(x)\big) \, : \, x \in X\} = \arg\min \{ -\big(\gamma_P(x) - \gamma_P^\text{ref}\big) \eta \big(\rho_P^\text{ref} - \rho_P(x)\big) \, : \, x \in X\}$, for every $\eta > 0$. {\hspace*{\fill}$\Box$}
\end{remark}  
From now on, in the light of the results above, we refer to a generic global solution of problem \eqref{eq:mobour} as the Pareto efficient portfolio $x_A$ of \eqref{eq:mobini}. The following Proposition \ref{th:proper} justifies the claim in (iv).
\begin{proposition}\label{th:proper}
An efficient portfolio $x_A$ enjoys the following properties:
\begin{enumerate}
\item[(a)]
if a feasible portfolio $x \in X$ is such that $\gamma_P(x) - \gamma_P^\text{ref} = \alpha \big(\gamma_P({x_A}) - \gamma_P^\text{ref}\big)$ for some $\alpha \ge 1$, then $\alpha \big(\rho_P^\text{ref} - \rho_P(x)\big) \le \rho_P^\text{ref} - \rho_P({x_A})$; 
\item[(b)]
if a feasible portfolio $x \in X$ is such that $\rho_P^\text{ref} - \rho_P(x) = \beta \big(\rho_P^\text{ref} - \rho_P({x_A})\big)$ for some $\beta \ge 1$, then $\beta \big(\gamma_P(x) - \gamma_P^\text{ref} \big)\le \gamma_P({x_A}) - \gamma_P^\text{ref}$.
\end{enumerate}
\end{proposition}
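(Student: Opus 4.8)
The plan is to exploit directly the global optimality of $x_A$ for \eqref{eq:mobour}, which gives $A(x) \le A(x_A)$ for every $x \in X$, and then to read both assertions as the mere arithmetic consequence of this single inequality once the prescribed rescaling of one of the two sides of $R_x$ is substituted in.

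First I would record that, since $x_A$ is here being referred to as an \emph{efficient} portfolio, we are in the non-degenerate situation $A(x_A) > 0$; by Proposition \ref{pr: area > 0} the alternative $A(x_A) = 0$ would force one of the objectives to be constant on $X$ (in which case the statement is vacuous), so this is exactly the convention established just before the proposition and justified by Propositions \ref{th:ottimo-pareto} and \ref{pr: area > 0}. Because $x_A \in X$, both factors $\gamma_P(x_A) - \gamma_P^\text{ref}$ and $\rho_P^\text{ref} - \rho_P(x_A)$ are nonnegative, and since their product $A(x_A)$ is positive, both are in fact strictly positive; hence either one may legitimately be cancelled as a common factor.

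To prove (a), I would take $x \in X$ with $\gamma_P(x) - \gamma_P^\text{ref} = \alpha\big(\gamma_P(x_A) - \gamma_P^\text{ref}\big)$, $\alpha \ge 1$, and substitute this into the optimality inequality $\big(\gamma_P(x) - \gamma_P^\text{ref}\big)\big(\rho_P^\text{ref} - \rho_P(x)\big) \le \big(\gamma_P(x_A) - \gamma_P^\text{ref}\big)\big(\rho_P^\text{ref} - \rho_P(x_A)\big)$. This yields $\alpha\big(\gamma_P(x_A) - \gamma_P^\text{ref}\big)\big(\rho_P^\text{ref} - \rho_P(x)\big) \le \big(\gamma_P(x_A) - \gamma_P^\text{ref}\big)\big(\rho_P^\text{ref} - \rho_P(x_A)\big)$, and dividing through by $\gamma_P(x_A) - \gamma_P^\text{ref} > 0$ gives precisely $\alpha\big(\rho_P^\text{ref} - \rho_P(x)\big) \le \rho_P^\text{ref} - \rho_P(x_A)$. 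Assertion (b) follows by the symmetric argument: starting from $\rho_P^\text{ref} - \rho_P(x) = \beta\big(\rho_P^\text{ref} - \rho_P(x_A)\big)$, substituting into the same inequality, and cancelling the positive factor $\rho_P^\text{ref} - \rho_P(x_A)$ instead.

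I do not expect any genuinely hard step: the whole argument is the chain ``global optimality of $x_A$ $\Rightarrow$ $A(x) \le A(x_A)$ $\Rightarrow$ cancel the common positive side of the rectangle''. The only point needing a little care is that cancellation, i.e. making sure the relevant side of $R_{x_A}$ is strictly positive, which is exactly what $A(x_A) > 0$ guarantees.
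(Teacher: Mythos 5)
Your proposal is correct and follows essentially the same route as the paper: the authors also deduce both claims from the single optimality inequality $A(x)\le A(x_A)$, writing it as the ratio chain $\alpha=\frac{\gamma_P(x)-\gamma_P^\text{ref}}{\gamma_P(x_A)-\gamma_P^\text{ref}}\le\frac{\rho_P^\text{ref}-\rho_P(x_A)}{\rho_P^\text{ref}-\rho_P(x)}=\frac{1}{\beta}$. Your version of the cancellation (dividing only by the strictly positive sides of $R_{x_A}$, guaranteed by $A(x_A)>0$) is, if anything, slightly more careful than the paper's, which also divides by $\rho_P^\text{ref}-\rho_P(x)$.
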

\begin{proof}
The claims are due to the following relations:
\[
\alpha = \frac{\gamma_P({x}) - \gamma_P^\text{ref}}{\gamma_P({x_A}) - \gamma_P^\text{ref}}\le \frac{\rho_P^\text{ref} - \rho_P({x_A})}{\rho_P^\text{ref} - \rho_P({x})} = \frac{1}{\beta},  
\]
where the inequality holds since $A(x_A) \ge A(x)$ for every $x \in X$. \end{proof}
The result above shows that, relative to the reference point, if one wants to improve an objective (w.r.t. its value at $x_A$) by a factor, then the price to pay is a corresponding deterioration of the other objective (w.r.t. its value at $x_A$) {\textit{by at least the same factor}}: for quite a clear illustration of this feature see the column {\bf improve}$|$worsen in Table \ref{tab:res1} and the related description.

\begin{remark}\label{rm:sr}
As side consideration, we recall that the (relative) substitution rate between risk and gain
\[
\left|\frac{1}{(\rho_P^{\text{ref}} - \rho_P)} \; \frac{\partial A}{\partial \gamma_P}\right| \; \left/ \; \left|\frac{1}{(\gamma_P - \gamma_P^{\text{ref}})} \; \frac{\partial A}{\partial \rho_P}\right| \right.,
\]
along an indifference line of $A$ in the objectives space, is unitary \citep[see, e.g.,][]{lootsma1995controlling}.
 {\hspace*{\fill}$\Box$}
\end{remark}

\section{Solvability issues}\label{sec:theory} 
While the simple ideas set forth in the previous section seem appealing, the question about how one can solve problem \eqref{eq:mobour} still remains open: suffice it to observe that, even with a concave $\gamma_P$ and a convex $\rho_P$, the objective function in \eqref{eq:mobour} is in general nonconvex. In fact, it is only semistrictly quasiconvex, see Theorem 5.15 in \citep{avriel2010generalized}. 

From now on, assume $\gamma_P$ and $\rho_P$ to be continuously differentiable and with Lipschitz continuous gradients. The following proposition, proving that any stationary point, with a corresponding non zero value of the area $A$, is a global optimum for \eqref{eq:mobour} and, in turn, a Pareto efficient portfolio that maximizes $A$, shows the viability and thus, the significance of the approach.
\begin{proposition}\label{th:statglob}
Let $\overline x \in X$ be stationary for problem \eqref{eq:mobour}, that is
\begin{equation}\label{eq:minpri}
-\nabla A(\overline x)\trt (x - \overline x) \ge 0, \; \forall x \in X,
\end{equation}
and such that $A(\overline x) > 0$. Then, $\overline x$ is global optimal for problem \eqref{eq:mobour} and, in turn, a global Pareto optimum for problem \eqref{eq:mobini}.   
\end{proposition}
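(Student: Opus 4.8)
The plan is to uncover a hidden concavity behind problem \eqref{eq:mobour}. Although the text already notes that the objective $-A$ is merely semistrictly quasiconvex, the sharper fact I would use is that $\sqrt{A}$ is genuinely \emph{concave} on $X$. Indeed, on $X$ the two factors $g(x) \triangleq \gamma_P(x) - \gamma_P^\text{ref}$ and $h(x) \triangleq \rho_P^\text{ref} - \rho_P(x)$ are nonnegative (this is exactly the content of the constraints defining $X$) and concave ($\gamma_P$ concave, $\rho_P$ convex), so $A = g\,h$ is a product of two nonnegative concave functions. The first step is therefore to record the elementary fact that the square root of a product of two nonnegative concave functions on a convex set is concave: restricting to a segment and using the concavity of $g$ and $h$ reduces the claim to $\sqrt{(\lambda g_1 + (1-\lambda) g_2)(\lambda h_1 + (1-\lambda) h_2)} \ge \lambda\sqrt{g_1 h_1} + (1-\lambda)\sqrt{g_2 h_2}$ for nonnegative scalars, which after squaring is precisely $g_1 h_2 + g_2 h_1 \ge 2\sqrt{(g_1 h_2)(g_2 h_1)}$, i.e. the AM--GM inequality. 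Hence $\sqrt{A}$ is concave on $X$.

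Next I would transfer the stationarity of $\overline x$ from $A$ to $\sqrt{A}$. Since $A(\overline x) > 0$ and $A$ is continuously differentiable, $\sqrt{A}$ is differentiable at $\overline x$ with $\nabla \sqrt{A}(\overline x) = \frac{1}{2\sqrt{A(\overline x)}}\, \nabla A(\overline x)$. Dividing the stationarity condition \eqref{eq:minpri} by the positive constant $2\sqrt{A(\overline x)}$ thus yields $\nabla \sqrt{A}(\overline x)\trt (x - \overline x) \le 0$ for all $x \in X$; that is, $\overline x$ is stationary for the problem of maximizing the concave function $\sqrt{A}$ over the convex set $X$.

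The conclusion is then immediate. By concavity of $\sqrt{A}$ together with its differentiability at $\overline x$, the first-order overestimate $\sqrt{A(x)} \le \sqrt{A(\overline x)} + \nabla\sqrt{A}(\overline x)\trt(x - \overline x)$ holds for every $x \in X$, and combining it with the stationarity inequality just derived gives $\sqrt{A(x)} \le \sqrt{A(\overline x)}$, hence $A(x) \le A(\overline x)$, for all $x \in X$. Therefore $\overline x$ is a global solution of \eqref{eq:mobour}, and, since $A(\overline x) > 0$, Proposition \ref{th:ottimo-pareto} guarantees that $\overline x$ is a global Pareto optimum for \eqref{eq:mobini}.

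The only genuinely delicate point I expect is establishing the concavity of $\sqrt{A}$ on $X$ — specifically getting the AM--GM reduction right and being careful that the nonnegativity of both factors (which is where the constraints defining $X$, rather than merely $\Delta$, come into play) is used where needed. Everything after that is routine convex-analysis bookkeeping, the role of the hypothesis $A(\overline x) > 0$ being precisely to ensure differentiability of $\sqrt{A}$ at $\overline x$ and to invoke Proposition \ref{th:ottimo-pareto}.
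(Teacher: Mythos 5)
Your proof is correct, and it takes a genuinely different route from the paper's. The paper argues by contradiction: assuming some $\widetilde x \in X$ has $A(\widetilde x) > A(\overline x)$, it combines the stationarity inequality with the gradient inequalities for the concave $\gamma_P$ and convex $\rho_P$ and then, through an algebraic manipulation driven by $A(\widetilde x) > A(\overline x)$, arrives at $0 < -\frac{\overline\gamma_P}{\widetilde\rho_P}(\overline\rho_P - \widetilde\rho_P)^2 \le 0$. You instead expose the structural reason the result holds: on $X$ both factors $g = \gamma_P - \gamma_P^{\text{ref}}$ and $h = \rho_P^{\text{ref}} - \rho_P$ are nonnegative and concave, so $\sqrt{A} = \sqrt{gh}$ is concave (your AM--GM reduction is the standard proof that the geometric mean of nonnegative concave functions is concave, and it is carried out correctly); since $A(\overline x) > 0$ makes $\sqrt{A}$ differentiable at $\overline x$ with gradient proportional to $\nabla A(\overline x)$ by a positive constant, stationarity for $A$ is stationarity for $\sqrt{A}$, and the first-order overestimate for the concave $\sqrt{A}$ finishes the job. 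The two arguments are ultimately cousins --- the paper's chain of inequalities is essentially an unrolled form of the same AM--GM mechanism --- but yours is cleaner and more informative: it shows that \eqref{eq:mobour} is a concave maximization in disguise (after the monotone transformation $A \mapsto \sqrt{A}$), which explains at one stroke why stationary points with positive area are global and sharpens the paper's remark that $-A$ is ``only semistrictly quasiconvex.'' The one point to state carefully in a polished write-up is that the first-order inequality $\sqrt{A(x)} \le \sqrt{A(\overline x)} + \nabla\sqrt{A}(\overline x)\trt(x - \overline x)$ is applied at a point $\overline x$ that may lie on the boundary of $X$; this is harmless because it follows from concavity of the one-dimensional restriction $t \mapsto \sqrt{A(\overline x + t(x - \overline x))}$ on $[0,1]$ together with the existence of its right derivative at $t = 0$, which is guaranteed by $A(\overline x) > 0$ and the continuous differentiability of $A$ near $\overline x$.
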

\begin{proof}
Assume by contradiction that $\widetilde x \in X$ exists such that $A(\widetilde x) > A(\overline x)$. We denote $\overline \gamma_P \triangleq \gamma_P(\overline x) - \gamma_P^\text{ref}$, $\overline \rho_P \triangleq \rho_P^\text{ref} - \rho_P(\overline x)$, $\widetilde \gamma_P \triangleq \gamma_P(\widetilde x) - \gamma_P^\text{ref}$ and $\widetilde \rho_P \triangleq \rho_P^\text{ref} - \rho_P(\widetilde x)$, which are positive quantities because $\overline x, \, \widetilde x \in X$ and
\begin{equation}\label{eq:interstep}
\widetilde \gamma_P \widetilde \rho_P = A(\widetilde x) > A(\overline x) = \overline \gamma_P \overline \rho_P > 0.
\end{equation}
We get the following chain of inequalities that leads to an absurdum:
\[
\begin{array}{rcl}
0 & \overset{(a)}{\le} & - \overline \rho_P \nabla \gamma_P(\overline x)\trt (\widetilde x - \overline x) + \overline \gamma_P \nabla \rho_P(\overline x)\trt (\widetilde x - \overline x)\\
& \overset{(b)}{\le} & \overline \rho_P \big(\gamma_P(\overline x) - \gamma_P(\widetilde x)\big) + \overline \gamma_P \big(\rho_P(\widetilde x) - \rho_P(\overline x)\big)\\
& = & \overline \rho_P (\overline \gamma_P - \widetilde \gamma_P) + \overline \gamma_P (\overline \rho_P - \widetilde \rho_P) = \overline \gamma_P \left(2 \overline \rho_P - \frac{\overline \rho_P \widetilde \gamma_P}{\overline \gamma_P} - \frac{\widetilde \rho_P^2}{\widetilde \rho_P}\right)\\
& \overset{(c)}{<} & - \frac{\overline \gamma_P}{\widetilde \rho_P} \left(- 2 \overline \rho_P \widetilde \rho + \overline \rho_P^2 + \widetilde \rho_P^2\right) = - \frac{\overline \gamma_P}{\widetilde \rho_P} (\overline \rho_P - \widetilde \rho_P)^2 \le 0,   
\end{array}
\] 
where (a) holds in view of \eqref{eq:minpri}, (b) is due to the concavity of $\gamma_P$ and the convexity of $\rho_P$, while (c) follows from $\widetilde \gamma_P / \overline \gamma_P > \overline \rho_P / \widetilde \rho_P$, which is entailed by \eqref{eq:interstep}. Then, $\overline x$ is global optimal for problem \eqref{eq:mobour} and, thanks to Proposition \ref{th:ottimo-pareto}, it is a Pareto optimum for problem \eqref{eq:mobini}.     

\end{proof}
Leveraging Proposition \ref{th:statglob}, we are left to devise a procedure that allows one to compute efficiently a stationary point for problem \eqref{eq:mobour} with a corresponding positive value of the area $A$. Suffice it to rely on the projected gradient algorithm with a sufficiently small constant stepsize, having the forethought to choose a starting point $x^0 \in X$ such that $A(x^0) > 0$. We recall that the projected gradient algorithm is a sequential convex approximation \citep[for further details cf.][]{facchinei2014flexible,facchinei2015parallel,facchinei2016feasible,scutari2014parallel,scutari2017parallel1,scutari2017parallel2} method whose core step consists in the calculation of the projection of a gradient iteration on the feasible set. Interestingly, as for problem \eqref{eq:mobour}, thanks to the peculiar nature of the objective function $A$, the projection can be performed not on the overall feasible set $X$ but only on the much easier to handle simplex $\Delta$. We stress that the computation of the projection $P_\Delta(x + \tau \nabla A(x))$ does not require to solve an optimization problem: it can be easily performed by means of simple calculations (see Section \ref{sec:ptf2} for more details).

Preliminarily, we observe that, letting $L_{\nabla \gamma_P}$, $L_{\nabla \rho_P}$, $L_{\rho_P}$, $L_{\gamma_P}$ be the Lipschitz moduli for $\nabla \gamma_P$, $\nabla \rho_P$, $\gamma_P$ and $\rho_P$ on $X$, respectively, and $\rho_P^{\text{min}} \triangleq \min \{\rho_P(x) \, : \, x \in X\}$ and $\gamma_P^{\text{max}} \triangleq \max \{\gamma_P(x) \, : \, x \in X\}$,
\begin{equation}\label{eq:lip}
L \triangleq L_{\nabla \gamma_P} (\rho_P^{\text{ref}} - \rho_P^{\text{min}}) + L_{\nabla \rho_P} (\gamma_P^{\text{max}} - \gamma_P^{\text{ref}}) + 2 L_{\rho_P} L_{\gamma_P} 
\end{equation}
is easily seen to be a Lipschitz constant for $\nabla A$ on $X$.
With these considerations in mind, here we report the resulting scheme of the method specified for problem \eqref{eq:mobour}. 
  
\medskip

\begin{algorithm}[H]
\KwData{$\tau \in (0, \frac{2}{L})$, $x^{0} \in X$ with $A(x^0) > 0$,  $\nu \longleftarrow 0$\;}
\Repeat{
{\nlset{(S.1)} \If{$x^{\nu}$ {\emph{is stationary for}} \eqref{eq:mobour}}{{\bf{stop}} and {\bf{return}} $x^\nu$\;} \label{S.1}}
\nlset{(S.2)} set $x^{\nu+1} = P_\Delta \big(x^\nu + \tau \nabla A(x^\nu)\big)$, $\nu\longleftarrow\nu+1$\; \label{S.2}}{}
\caption{\label{algo}Modified projected gradient Algorithm}
\end{algorithm}

\noindent
To obtain the convergence of Algorithm \ref{algo}, we need to show that, despite we are projecting on $\Delta$, each iterate $x^\nu$ is such that $\gamma_P(x^\nu) > \gamma_P^{\text{ref}}$ and $\rho_P(x^\nu) < \rho_P^{\text{ref}}$. 
\begin{proposition}\label{th:converg}
Each iterate $x^\nu \in \Delta$ generated by Algorithm \ref{algo} is such that $x^\nu \in X$ and $A(x^\nu) > 0$ and each limit point of the sequence $(x^\nu)$ is global optimal for problem \eqref{eq:mobour} and, in turn, a global Pareto optimum for \eqref{eq:mobini}.
\end{proposition}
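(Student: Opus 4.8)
The plan is to establish the two assertions separately. The first — that every $x^\nu$ stays in $X$ with $A(x^\nu)>0$ — I would prove by induction on $\nu$, the base case being the prescribed starting point; the delicate point is that $x^{\nu+1}=P_\Delta\big(x^\nu+\tau\nabla A(x^\nu)\big)$ is a priori known to lie only in $\Delta$, not in $X$, so one must argue that the projection does not escape $X$. The second assertion I would then obtain from the classical projected-gradient machinery: a sufficient-decrease estimate forces $\|x^{\nu+1}-x^\nu\|\to 0$, which lets one identify limit points as stationary points of \eqref{eq:mobour}; Proposition \ref{th:statglob} does the rest. Throughout I use that $X=\Delta\cap\{\gamma_P\ge\gamma_P^\text{ref}\}\cap\{\rho_P\le\rho_P^\text{ref}\}$ is closed and compact, and that $\nabla A$ is $L$-Lipschitz on $X$ with $L$ as in \eqref{eq:lip}.

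For the induction step, assume $x^\nu\in X$ and $A(x^\nu)>0$, so $\gamma_P(x^\nu)-\gamma_P^\text{ref}>0$ and $\rho_P^\text{ref}-\rho_P(x^\nu)>0$; if $x^\nu$ is stationary the algorithm terminates and Proposition \ref{th:statglob} already yields the whole claim, so assume it is not. Testing the variational inequality characterizing the projection $x^{\nu+1}=P_\Delta\big(x^\nu+\tau\nabla A(x^\nu)\big)$ at the point $x^\nu$ gives $x^{\nu+1}\neq x^\nu$ (otherwise $x^\nu$ would be stationary) and the ascent inequality $\nabla A(x^\nu)\trt(x^{\nu+1}-x^\nu)\ge\frac{1}{\tau}d_\nu^2>0$, where $d_\nu\triangleq\|x^{\nu+1}-x^\nu\|$. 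Along the segment $v(t)\triangleq x^\nu+t(x^{\nu+1}-x^\nu)\in\Delta$, $t\in[0,1]$, put $g(t)\triangleq A(v(t))$; on any subinterval $[0,t^*]$ with $v([0,t^*])\subseteq X$, the Lipschitz continuity of $\nabla A$ on $X$ and the descent lemma give $g(t)\ge A(x^\nu)+t\,\nabla A(x^\nu)\trt(x^{\nu+1}-x^\nu)-\frac{L}{2}t^2d_\nu^2\ge A(x^\nu)+d_\nu^2\,t\left(\frac{1}{\tau}-\frac{Lt}{2}\right)>A(x^\nu)>0$ for $t\in(0,t^*]$, since $\tau<2/L$. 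Now suppose, for contradiction, $x^{\nu+1}\notin X$; as $x^{\nu+1}\in\Delta$, one of the gaps is strictly negative at $t=1$, so $t_0\triangleq\inf\{t\in[0,1]:\gamma_P(v(t))\le\gamma_P^\text{ref}\text{ or }\rho_P(v(t))\ge\rho_P^\text{ref}\}$ is well defined, and $t_0\in(0,1]$ because both gaps are positive and continuous at $t=0$. For $t<t_0$ one has $v(t)\in X$, hence $v([0,t_0])\subseteq X$ by closedness of $X$, while continuity forces one of the two gaps to vanish at $t_0$, i.e. $g(t_0)=0$; this contradicts the displayed bound with $t^*=t_0$. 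Therefore $x^{\nu+1}\in X$, so $v([0,1])\subseteq X$ and, taking $t=1$, $A(x^{\nu+1})\ge A(x^\nu)+c\,d_\nu^2>0$ with $c\triangleq\frac{1}{\tau}-\frac{L}{2}>0$, closing the induction.

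For the limit points, the estimate $A(x^{\nu+1})\ge A(x^\nu)+c\,d_\nu^2$ shows $(A(x^\nu))$ is nondecreasing, and it is bounded above since $X\subseteq\Delta$ is compact and $A$ is continuous; hence it converges and $\sum_\nu d_\nu^2<\infty$, so $\|x^{\nu+1}-x^\nu\|\to 0$. If $x^{\nu_k}\to\bar x$, then also $x^{\nu_k+1}\to\bar x$, and $\bar x\in X$ with $A(\bar x)=\lim_\nu A(x^\nu)\ge A(x^0)>0$. Passing to the limit in $x^{\nu_k+1}=P_\Delta\big(x^{\nu_k}+\tau\nabla A(x^{\nu_k})\big)$, using continuity of $P_\Delta$ and of $\nabla A$, yields $\bar x=P_\Delta\big(\bar x+\tau\nabla A(\bar x)\big)$, which by the projection characterization is exactly $-\nabla A(\bar x)\trt(z-\bar x)\ge 0$ for all $z\in\Delta$, and a fortiori \eqref{eq:minpri}. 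Thus $\bar x$ is stationary for \eqref{eq:mobour} with $A(\bar x)>0$, and Proposition \ref{th:statglob} gives that $\bar x$ is global optimal for \eqref{eq:mobour}, hence a global Pareto optimum for \eqref{eq:mobini}. (If the algorithm stops after finitely many iterations, the returned iterate is stationary and lies in $X$ with positive area by the induction above, so Proposition \ref{th:statglob} applies to it directly.)

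The real obstacle is the feasibility-preservation part: since we project onto $\Delta$ and not onto $X$, the textbook ``projected-gradient iterates stay feasible'' argument is unavailable, and one must instead combine the ascent inequality, the descent lemma on $[x^\nu,x^{\nu+1}]$ — which is only legitimate where $\nabla A$ is Lipschitz, i.e.\ inside $X$ — and the continuity of the two gap functions to rule out the segment ever leaving $X$. The remaining ingredients (telescoping the sufficient decrease, $d_\nu\to 0$, and the fixed-point identification of limit points) are entirely routine.
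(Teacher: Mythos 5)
Your proof is correct and has the same overall skeleton as the paper's argument (induction to keep the iterates in $X$ with positive area, then the standard projected-gradient convergence machinery plus Proposition \ref{th:statglob}), but the crucial feasibility-preservation step is handled by a genuinely different device. The paper supposes that \emph{both} constraints fail at $x^{\nu+1}$, applies the descent lemma separately to $\rho_P$ and $\gamma_P$ on $[x^\nu,x^{\nu+1}]$ to bound $\nabla\gamma_P(x^\nu)\trt(x^{\nu+1}-x^\nu)$ and $-\nabla\rho_P(x^\nu)\trt(x^{\nu+1}-x^\nu)$, and combines these with the sufficient-ascent property and the explicit structure of $L$ in \eqref{eq:lip} to obtain $\frac{1}{\tau}\|x^{\nu+1}-x^\nu\|^2<\frac{1}{\tau}\|x^{\nu+1}-x^\nu\|^2$; it then deduces $x^{\nu+1}\in X$ from $A(x^{\nu+1})>A(x^\nu)>0$ together with the fact that at least one factor of $A(x^{\nu+1})$ is nonnegative. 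You instead run a first-exit-time argument along the segment: at the first boundary crossing $t_0$ one factor of $A$ would vanish, while the descent lemma — applied only on $v([0,t_0])\subseteq X$, where $\nabla A$ is genuinely $L$-Lipschitz — forces $A(v(t_0))>A(x^\nu)>0$. Your route buys a cleaner treatment of the Lipschitz domain issue (the paper's descent-lemma applications for $\rho_P$, $\gamma_P$ and, implicitly, for $A$ in the ``standard reasonings'' step take place on a segment that, under its contradiction hypothesis, leaves $X$, where the moduli were defined), at the mild cost of two observations you should make explicit: the set $\{t\in[0,1]:\gamma_P(v(t))\le\gamma_P^{\text{ref}}\text{ or }\rho_P(v(t))\ge\rho_P^{\text{ref}}\}$ is closed, so its infimum is attained, and $X$ is convex (intersection of $\Delta$ with a superlevel set of a concave function and a sublevel set of a convex function), which is what licenses the jump from $x^{\nu+1}\in X$ to $v([0,1])\subseteq X$ before you set $t=1$. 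The tail of your argument (telescoping the sufficient increase, $\|x^{\nu+1}-x^\nu\|\to 0$, fixed-point identification of limit points, and stationarity on $\Delta$ implying \eqref{eq:minpri}) coincides with what the paper delegates to standard references.
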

\begin{proof}
We prove the claim by induction. More precisely, we show that, if $x^\nu \in \Delta$ is such that $x^\nu \in X$ and $A(x^\nu) > 0$, then, for the next iterate $x^{\nu + 1} \in \Delta$, we have
\begin{equation}\label{eq:notworsen}
\rho_P(x^{\nu+1}) \le \rho_P^{\text{ref}} \enspace \text{or} \enspace \gamma_P(x^{\nu+1}) \ge \gamma_P^{\text{ref}}, 
\end{equation}
and, thus, since $A(x^{\nu + 1}) > A(x^\nu) > 0$ by standard reasonings \citep[cf. the proof of Proposition 2.3.2 in][]{Bert99}, $x^{\nu+1}$ must belong to $X$. Let us assume that \eqref{eq:notworsen} is not true: i.e.,
\begin{equation}\label{eq:notworsencontr}
\rho_P(x^{\nu+1}) > \rho_P^{\text{ref}} \enspace \text{and} \enspace \gamma_P(x^{\nu+1}) < \gamma_P^{\text{ref}}.
\end{equation}
By using the descent lemma \citep[Proposition A.24 in][]{Bert99} and \eqref{eq:notworsencontr},  recalling that $\rho_P(x^{\nu}) < \rho_P^{\text{ref}}$, $\gamma_P(x^{\nu}) > \gamma_P^{\text{ref}}$, we have
\begin{equation}\label{eq:intermrel}
\begin{array}{c}
0 < \rho_P(x^{\nu+1}) - \rho_P(x^{\nu}) \le \nabla \rho_P(x^\nu)\trt (x^{\nu + 1} - x^{\nu}) + \frac{L_{\nabla{\rho_P}}}{2} \|x^{\nu+1} - x^{\nu}\|^2,\\[5pt]
0 < \gamma_P(x^{\nu}) - \gamma_P(x^{\nu+1}) \le - \nabla \gamma_P(x^\nu)\trt (x^{\nu + 1} - x^{\nu}) + \frac{L_{\nabla \gamma_P}}{2} \|x^{\nu+1} - x^{\nu}\|^2.
\end{array}
\end{equation}
In turn, we get the following contradiction:
\[
\begin{array}{rcl}
\frac{1}{\tau} \|x^{\nu+1} - x^\nu\|^2 & \overset{(a)}{\le}& \nabla A(x^\nu)\trt (x^{\nu+1} - x^\nu)\\[5pt]
& = & \left[(\rho_P^{\text{ref}} - \rho_P(x^\nu)) \nabla \gamma_P(x^\nu) - (\gamma_P(x^\nu) - \gamma_P^{\text{ref}}) \nabla \rho_P(x^\nu) \right]\trt (x^{\nu+1} - x^{\nu})\\[5pt]  
& \overset{(b)}{<} & \left[(\rho_P^{\text{ref}} - \rho_P(x^\nu)) \frac{L_{\nabla \gamma_P}}{2} + (\gamma_P(x^\nu) - \gamma_P^{\text{ref}}) \frac{L_{\nabla \rho_P}}{2}\right] \|x^{\nu+1} - x^{\nu}\|^2\\[5pt]
& \overset{(c)}{<} & \frac{1}{\tau} \|x^{\nu+1} - x^\nu\|^2,
\end{array}
\]
where (a) is due to the classical sufficient ascent property (for $A$ at $x^\nu$) of the projected gradient direction ($x^{\nu+1} - x^\nu$) \citep[see, e.g., equation (2.53) in][]{Bert99}, (b) follows from \eqref{eq:intermrel} and (c) holds since
\[
\frac{1}{\tau} > \frac{L}{2} \ge (\rho_P^{\text{ref}} - \rho_P(x^\nu)) \frac{L_{\nabla \gamma_P}}{2} + (\gamma_P(x^\nu) - \gamma_P^{\text{ref}}) \frac{L_{\nabla \rho_P}}{2},
\]
which is true in view of $\tau \in (0, 2/L)$ and \eqref{eq:lip}. The convergence of the algorithm to a stationary point, with a corresponding positive area, follows standard arguments \citep[see, e.g., Section 2.3 in][]{Bert99}. The last claim is due to Propositions \ref{th:ottimo-pareto} and \ref{th:statglob}.
\end{proof}

\section{Numerical results}\label{sec:ptf2}
We study the behavior of our selection strategy using, as benchmark, the datasets presented in \cite{bruni2016real}: in particular, data consist in weekly returns time series for assets and indexes belonging to several major stock markets (see summary Table \ref{tab:dsets}).

As for the numerical testing of the approach, referring to \cite{markowitz1952portfolio}, we employ variance as risk measure. In our framework, having $n$ available assets, let $\gamma_P(x) = 100 \, m\trt x$ and $\rho_P(x) = 100 \, x\trt V x$ denote (in percentage) mean and variance of the return associated with portfolio $x \in \mathbb R^n$ in terms of mean $m \in \mathbb R^n$ and variance $\mathbb M_n \ni V \succeq 0$ of the return of the $n$ assets, respectively. We distinguish between portfolios by resorting to the optimization problem \eqref{eq:mobour} where we consider as reference point the nadir vector: hence, {\boldmath$\rho_P^{\text{ref}}$} is the risk (in percentage) of the portfolio with maximal gain and {\boldmath$\gamma_P^\text{ref}$} is the gain (in percentage) of the minimum variance portfolio. We also denote by {\boldmath$\rho_P^{\min}$} $\triangleq 100 \min \{x\trt V x \, : \, x \in X\}$ and by {\boldmath$\gamma_P^{\max}$} $\triangleq 100 \max_{i \in \{1,\ldots,n\}} \{m_i\}$, the ideal values (in percentage) of risk and gain, respectively. All these data are collected in Table \ref{tab:dsetsbis}.

The results of our tests are summarized in Table \ref{tab:res1}.
For each index we compare our approach ({\bf area}) with the classical $\varepsilon$-constraints scalarization method in which the gain {\boldmath$\gamma_P$} is required to be no worse than a fixed level.
In particular, we consider 5 different levels: namely, $\alpha (\gamma_P^{\max}-\gamma_P^{\text{ref}}) + \gamma_P^{\text{ref}}$ with $\alpha = 0.01$, $0.25$, $0.5$, $0.75$, and $0.99$. For each method we report: gain {\bf \boldmath$\gamma_P$} $\triangleq 100 \, m\trt x$, risk {\boldmath$\rho_P$} $\triangleq 100 \, x\trt V x$, area {\boldmath$A$} $\triangleq \big(${\boldmath{$\gamma_P - \gamma_P^{\text{ref}}$}}$\big) \, \big(${\boldmath{$\rho_P^{\text{ref}} - \rho_P$}}$\big)$, and with {\bf \#ptf\_a} the number of $x_i \geq 1e-3,$ with $x$ being the computed solution. Moreover,
$${\boldsymbol{\beta_1}} \triangleq \frac{\gamma_P^{\max} - \gamma_P(x)}{\gamma_P^{\max} - \gamma_P^{\text{ref}}}, \enspace {\boldsymbol{\beta_2}} \triangleq \frac{\rho_P(x) - \rho_P^{\min}}{\rho_P^{\text{ref}} - \rho_P^{\min}},$$
and, thus, $\boldsymbol{\|\beta\|}$, with $\beta \triangleq (\beta_1, \, \beta_2)\trt,$ denotes the normalized distance of the computed values for risk and gain from the ideal ones $(\rho_P^{\min}, \,\gamma_P^{\max}).$ 
The numerical tests confirm the theoretical insights of previous sections (see Figure \ref{fig:area} and Table \ref{tab:res1}). In particular, our approach actually provides a solution that maximizes the corresponding area. Furthermore, the observed behavior of $\|\boldsymbol{\beta}\|$ (see Table \ref{tab:res1}) clearly indicates that employing the area $A$ as performance measure, while enjoying the distinctive and nice features highlighted in Section \ref{sec:prel}, gives also values for risk and gain that turn out to be close to the ideal ones. Finally, in column {\bf improve}$|$worsen, we report the numerical evidences for the peculiar feature underlined in Proposition \ref{th:proper}. Specifically, we distinguish two cases: if the efficient portfolio $x$ (obtained by the $\varepsilon$-constraints scalarization) entails an improvement w.r.t. $x_A$ in terms of $\gamma_P$, then we indicate in the table the quantities
\[
\text{\bf improvement} \; {\boldsymbol{\gamma}}_{\mathbf{P}} = \left. \frac{{\boldsymbol{\gamma}}_{\mathbf{P}} {\mathbf{(x) -}} {\boldsymbol{\gamma}}_{\mathbf{P}}^{\mathbf{\text ref}}}{{\boldsymbol{\gamma}}_{\mathbf{P}} {\mathbf{(x_A) -}} {\boldsymbol{\gamma}}_{\mathbf{P}}^{\mathbf{\text ref}}} \; \right| \; \frac{\rho_P^{\text ref} - \rho_P(x_A)}{\rho_P^{\text ref} - \rho_P(x)} = {\text{worsening}} \; \rho_P, 
\] 
whereas, if the improvement is obtained in terms of  $\rho_P$, then we indicate in the table the quantities
\[
{\text{worsening}} \; \gamma_P = \left. \frac{\gamma_P(x_A) - \gamma_P^{\text ref}}{\gamma_P(x) - \gamma_P^{\text ref}} \; \right| \; \frac{{\boldsymbol{\rho}}_{\mathbf{P}}^{\mathbf{\text ref}} - {\boldsymbol{\rho}}_{\mathbf{P}} {\mathbf{(x)}}}{{\boldsymbol{\rho}}_{\mathbf{P}}^{\mathbf{\text ref}} - {\boldsymbol{\rho}}_{\mathbf{P}} {\mathbf{(x_A)}}} = \text{\bf improvement} \; {\boldsymbol{\rho}}_{\mathbf{P}}.
\] 
Confirming the theoretical insights of Proposition \ref{th:proper}, by moving from $x_A$ to $x$, the worsening in terms of an objective is always greater than the corresponding improvement in terms of the other objective. 

\begin{figure}[h!]
\centering
\includegraphics[scale=0.45]{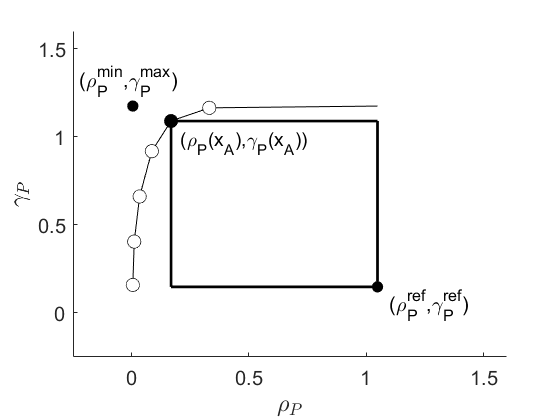}
\caption{Plot of results for the NASDAQComp data set}\label{fig:area}
\end{figure}

\begin{table}[h!]
\centering
\caption{Datasets synthetic description}\label{tab:dsets}
\vspace{5pt}
{\footnotesize{\begin{tabular}{r|ccc}
 \bf{Index} & \bf{\#assets} & \bf{\#weeks} & \bf{From-To}  \\\hline
 DowJones & 28 & 1363 & 2/1990 - 4/2016\\
 NASDAQ100 & 82 & 596 & 11/2004 - 4/2016\\
 FTSE100 & 83 & 717 & 7/2002 - 4/2016\\
 SP500 & 442 & 595 & 11/2004 - 4/2016\\
 NASDAQComp & 1203 & 685 & 2/2003 - 4/2016\\
 FF49Industries & 49 & 2325 & 7/1969 - 7/2015\\
   \hline
\end{tabular}}}
\end{table}

\noindent All experiments have been carried out on an Intel Core i7-4702MQ CPU @ 2.20GHz x 8 with Ubuntu 14.04 LTS 64-bit and by using Matlab R2017b. When addressing the nonconvex problem \eqref{eq:mobour}, we resort to the modified projected gradient Algorithm \ref{algo}. In particular, the procedure starts from $x^0 \in \Delta$ such that $\gamma_P(x^\nu) > \gamma_P^{\text{ref}}$ and $\rho_P(x^\nu) < \rho_P^{\text{ref}}$ and, thus, $A(x^0) > 0$ (see Table \ref{tab:dsetstris}): $x^0$ is computed as the portfolio with the whole budget invested on the asset with maximal ratio $\gamma_P / \rho_P$. We use a stepsize $\tau = 0.1$ and, as stopping criterion (cf. step \ref{S.1} in Algorithm \ref{algo}), the value for the stationarity measure $\|x^{\nu+1} - x^\nu\|_\infty$ is required to be smaller than $1e-5$. Furthermore, we rely on the Matlab function \verb projsplx  in order to compute the projection on the simplex $\Delta$ (cf. step \ref{S.2} in Algorithm \ref{algo}). The quadratic problems, obtained by means of the $\varepsilon$-constraints method, are solved by using \verb quadprog  with \verb interior-point-convex  algorithm. In Table \ref{tab:dsetstris} we also report the total number of iterations {\bf{\#iter}} and the elapsed time {\bf{elaps\_time}} (in seconds) needed to satisfy the stopping criterion in step \ref{S.2}. We stress that, for each stock market considered, the amount of time to solve problem \eqref{eq:mobour} is below half a second.   

\renewcommand{\abstractname}{Acknowledgements}
\begin{abstract}
The authors are grateful to F. Tardella for his insightful suggestions and comments on the contents of this work. 
\end{abstract}

\begin{table}[h!]
\centering
\caption{Datasets ideal and nadir values}\label{tab:dsetsbis}
\vspace{5pt}
{\footnotesize{\begin{tabular}{r|cccc}
 \bf{Index} & \boldmath${\gamma_P^\text{\bf ref}}$ & \boldmath${\gamma_P^{\max}}$ & \boldmath${\rho_P^{\min}}$ & \boldmath${\rho_P^\text{\bf ref}}$ \\\hline
 DowJones & 0.214 & 0.605 & 0.040 & 0.347 \\
 NASDAQ100 & 0.242 & 1.030 & 0.039 & 0.676 \\
 FTSE100 & 0.254 & 0.802 & 0.030 & 0.649  \\
 SP500 & 0.190 & 1.032 & 0.022 & 0.677  \\
 NASDAQComp & 0.148 & 1.174 & 0.005 & 1.048  \\
 FF49Industries & 0.325 & 0.544 & 0.029 & 0.095  \\
   \hline
\end{tabular}}}
\end{table}

\begin{table}[h!]
\centering
\caption{Numerical results for the mean-variance model \label{tab:res1}}
\vspace{5pt}
{\footnotesize{\begin{tabular}{lr|cccccccc}
 & \bf{Method} & \boldmath$\gamma_P$ & \boldmath$\rho_P$ & \boldmath$A$ & $\boldsymbol{\beta}_1$ &  $\boldsymbol{\beta}_2$ & $\|\boldsymbol{\beta}\|$ & {\bf{improve}}$|$worsen & \bf{\#ptf\_a} \\  \hline
 \multirow{6}{*}{\begin{sideways}\bf{\tiny{DowJones}}\end{sideways}} & {\bf area} &  0.542 & 0.129 & 0.071 & 0.162 & 0.290 & 0.332 & 1.000 $|$ 1.000 & 6  \\
 & \boldmath$\gamma_P$ $\geq$ 0.218 & 0.218 & 0.040 & 0.001 & 0.990 & 0.000 & 0.990 & 82.00 $|$ {\bf{1.408}} & 15  \\
 & \boldmath$\gamma_P$ $\geq$ 0.312 & 0.312 & 0.048 & 0.029 & 0.750 & 0.016 & 0.750 & 3.347 $|$ {\bf{1.371}} & 14   \\
 & \boldmath$\gamma_P$ $\geq$ 0.410 & 0.410 & 0.078 & 0.054 & 0.500 & 0.056 & 0.503 & 1.673 $|$ {\bf{1.234}}& 11  \\
 & \boldmath$\gamma_P$ $\geq$ 0.508 & 0.508 & 0.110 & 0.070 & 0.250 & 0.141 & 0.287 & 1.116 $|$ {\bf{1.087}} & 8  \\
 & \boldmath$\gamma_P$ $\geq$ 0.602 & 0.602 & 0.254 & 0.036 & 0.010 & 0.847 & 0.847 & {\bf{1.183}} $|$ 2.344 & 2  \\
 \hline
 \multirow{6}{*}{\begin{sideways}\bf{\tiny{NASDAQ100}}\end{sideways}} & {\bf area} &  0.918 & 0.174 & 0.339 & 0.142 & 0.212 & 0.255 & 1.000 $|$ 1.000 & 7  \\
 & \boldmath$\gamma_P$ $\geq$ 0.250 & 0.250 & 0.039 & 0.005 & 0.774 & 0.000 & 0.990 & 84.50 $|$ {\bf{1.269}} & 11  \\
 & \boldmath$\gamma_P$ $\geq$ 0.439 & 0.439 & 0.049 & 0.123 &  0.750 & 0.000 & 0.750 & 3.431 $|$ {\bf{1.249}}  & 19  \\
 & \boldmath$\gamma_P$ $\geq$ 0.636 & 0.636 & 0.075 & 0.237 & 0.500 & 0.028 & 0.501 & 1.716 $|$ {\bf{1.197}} & 13  \\
 & \boldmath$\gamma_P$ $\geq$ 0.833 & 0.833 & 0.129 & 0.323 & 0.250 & 0.105 & 0.272 & 1.144 $|$ {\bf{1.090}} & 10  \\
 & \boldmath$\gamma_P$ $\geq$ 1.022 & 1.022 & 0.578 & 0.076 & 0.010 & 0.770 & 0.770 & {\bf{1.154}} $|$ 5.122 & 2  \\
 \hline
 \multirow{6}{*}{\begin{sideways}\bf{\tiny{FTSE100}}\end{sideways}} & {\bf area} &  0.680 & 0.157 & 0.210 & 0.222 & 0.205 & 0.302 & 1.000 $|$ 1.000 & 4  \\
 & \boldmath$\gamma_P$ $\geq$ 0.259 & 0.259 & 0.030 & 0.003 & 0.990 & 0.000 & 0.990 & 85.20 $|$ {\bf{1.258}} &  23  \\
 & \boldmath$\gamma_P$ $\geq$ 0.391 & 0.391 & 0.037 & 0.084 & 0.750 & 0.012 & 0.750 & 3.109 $|$ {\bf{1.244}} & 15  \\
 & \boldmath$\gamma_P$ $\geq$ 0.528 & 0.528 & 0.066 & 0.160 & 0.500 & 0.058 & 0.503 & 1.555 $|$ {\bf{1.185}} & 9  \\
 & \boldmath$\gamma_P$ $\geq$ 0.665 & 0.665 & 0.141 & 0.209 & 0.250 & 0.179 & 0.307 & 1.036 $|$ {\bf{1.032}} & 4  \\
 & \boldmath$\gamma_P$ $\geq$ 0.796 & 0.796 & 0.596 & 0.029 & 0.010 & 0.914 & 0.914 & {\bf{1.272}} $|$ 9.283  & 2  \\
 \hline
 \multirow{6}{*}{\begin{sideways}\bf{\tiny{SP500}}\end{sideways}} & {\bf area} &  0.914 & 0.169 & 0.368 & 0.141 & 0.225 & 0.265 & 1.000 $|$ 1.000 & 7  \\
 & \boldmath$\gamma_P$ $\geq$ 0.198 & 0.198 & 0.022 & 0.006 & 0.990 & 0.000 & 0.990 & 90.50 $|$ {\bf{1.289}} & 25  \\
 & \boldmath$\gamma_P$ $\geq$ 0.400 & 0.400 & 0.030 & 0.136 & 0.750 & 0.013 & 0.750 & 3.448 $|$ {\bf{1.274}} & 20  \\
 & \boldmath$\gamma_P$ $\geq$ 0.611 & 0.611 & 0.059 & 0.260 & 0.500 & 0.057 & 0.503 & 1.720 $|$ {\bf{1.216}} & 16  \\
 & \boldmath$\gamma_P$ $\geq$ 0.822 & 0.822 & 0.122 & 0.351 & 0.250 & 0.153 & 0.293 & 1.145 $|$ {\bf{1.092}} & 11  \\
 & \boldmath$\gamma_P$ $\geq$ 1.024 & 1.024 & 0.570 & 0.089 & 0.010 & 0.837 & 0.837 & {\bf{1.152}} $|$ 4.748  & 2  \\
 \hline
 \multirow{6}{*}{\begin{sideways}\bf{\tiny{NASDAQComp}}\end{sideways}} & {\bf area} &  1.089 & 0.168 & 0.828 & 0.083 & 0.156 & 0.177 & 1.000 $|$ 1.000 &  14  \\
 & \boldmath$\gamma_P$ $\geq$ 0.159 & 0.159 & 0.005 & 0.011 & 0.990 & 0.000 & 0.990 & 85.54 $|$ {\bf{1.185}} & 79   \\
 & \boldmath$\gamma_P$ $\geq$ 0.405 & 0.405 & 0.011 & 0.266 & 0.750 & 0.006 & 0.750 & 3.661 $|$ {\bf{1.178}} & 81   \\
 & \boldmath$\gamma_P$ $\geq$ 0.661 & 0.661 & 0.034 & 0.521 & 0.500 & 0.028 & 0.501 & 1.834 $|$ {\bf{1.152}} & 67   \\
 & \boldmath$\gamma_P$ $\geq$ 0.918 & 0.918 & 0.086 & 0.740 & 0.250 & 0.078 & 0.262 & 1.222 $|$ {\bf{1.093}} & 38   \\
 & \boldmath$\gamma_P$ $\geq$ 1.164 & 1.164 & 0.331 & 0.729 & 0.010 & 0.312 & 0.312 & {\bf{1.080}} $|$ 1.227  & 3   \\
 \hline
 \multirow{6}{*}{\begin{sideways}\bf{\tiny{FF49Industries}}\end{sideways}} & {\bf area} &  0.465 & 0.051 & 0.006 & 0.361 & 0.332 & 0.490 & 1.000 $|$ 1.000 & 8  \\
 & \boldmath$\gamma_P$ $\geq$ 0.327 & 0.327 & 0.029 & 0.000 & 0.990 & 0.000 & 0.990 & 70.00 $|$ {\bf{1.500}} & 6   \\
 & \boldmath$\gamma_P$ $\geq$ 0.380 & 0.380 & 0.033 & 0.003 & 0.750 & 0.058 & 0.752 & 2.545 $|$ {\bf{1.409}} & 8   \\
 & \boldmath$\gamma_P$ $\geq$ 0.435 & 0.435 & 0.042 & 0.006 & 0.500 & 0.195 & 0.537 & 1.273 $|$ {\bf{1.204}} & 8   \\
 & \boldmath$\gamma_P$ $\geq$ 0.489 & 0.489 & 0.059 & 0.006 & 0.250 & 0.454 & 0.518 & {\bf{1.171}} $|$ 1.222  & 6   \\
 & \boldmath$\gamma_P$ $\geq$ 0.542 & 0.542 & 0.090 & 0.001 & 0.010 & 0.926 & 0.926 & {\bf{1.550}} $|$ 8.800  &  2   \\
 \hline
 \end{tabular}}}
\end{table}

\begin{table}[h!]
\centering
\caption{Algorithmic details}\label{tab:dsetstris}
\vspace{5pt}
{\footnotesize{\begin{tabular}{r|cccccc}
 \bf{Index} & \boldmath${\gamma_P(x^0)}$ & \boldmath${\rho_P(x^0)}$ & \boldmath${A(x^0)}$ & \bf{\#iter} & \bf{elaps\_time} \\\hline
 DowJones & 0.246 & 0.089 & 0.008 & 659 & 0.030 \\
 NASDAQ100 & 0.325 & 0.090 & 0.048 & 206 & 0.013 \\
 FTSE100 & 0.293 & 0.065 & 0.023 & 379 & 0.020 \\
 SP500 & 0.299 & 0.055 & 0.068 & 183 & 0.129 \\
 NASDAQComp & 0.382 & 0.096 & 0.223 & 89 & 0.433 \\
 FF49Industries & 0.378 & 0.039 & 0.003 & 10951 & 0.341  \\
   \hline
\end{tabular}}}
\end{table}


\bibliographystyle{plainnat}
\bibliography{Surbib}

\end{document}